\documentclass[11pt, reqno]{article}
\usepackage{amsmath,amssymb,amsfonts,amsthm}
\usepackage{color}

\usepackage[colorlinks=true,linkcolor=blue,citecolor=blue,urlcolor=blue,pdfborder={0 0 0}]{hyperref}
\usepackage{cleveref}

\usepackage{caption}
\usepackage{thmtools}

\usepackage{mathrsfs}

\crefname{theorem}{Theorem}{Theorems}
\crefname{thm}{Theorem}{Theorems}
\crefname{lemma}{Lemma}{Lemmas}
\crefname{lem}{Lemma}{Lemmas}
\crefname{remark}{Remark}{Remarks}
\crefname{prop}{Proposition}{Propositions}
\crefname{defn}{Definition}{Definitions}
\crefname{corollary}{Corollary}{Corollaries}
\crefname{conjecture}{Conjecture}{Conjectures}
\crefname{question}{Question}{Questions}
\crefname{chapter}{Chapter}{Chapters}
\crefname{section}{Section}{Sections}
\crefname{figure}{Figure}{Figures}

\theoremstyle{plain}
\newtheorem{thm}{Theorem}[section]

\newtheorem{lemma}[thm]{Lemma}
\newtheorem{theorem}[thm]{Theorem}

\newtheorem{corollary}[thm]{Corollary}

\theoremstyle{definition}

\newtheorem{example}[thm]{Example}

\theoremstyle{remark}
\newtheorem{remark}[thm]{Remark}

\numberwithin{equation}{section}


\newcommand{\R}{\mathbb R}
\newcommand{\Z}{\mathbb Z}
\newcommand{\N}{\mathbb N}


\newcommand{\cP}{\mathcal P}


\newcommand{\sE}{\mathscr E}

\newcommand{\sR}{\mathscr R}





\usepackage[margin=1.07in]{geometry}
\usepackage{extarrows}
\usepackage{titling}
\usepackage{commath}
\usepackage{wasysym}
\usepackage{mathtools}
\usepackage{titlesec}
\newcommand{\eps}{\varepsilon}
\usepackage{bbm}
\usepackage{setspace}
\setstretch{1}
\usepackage{enumitem}

\usepackage[textsize=tiny]{todonotes}

\usepackage{cite}

\newcommand{\bP}{\mathbf P}
\newcommand{\bE}{\mathbf E}

\pretitle{\begin{flushleft}\Large}
\posttitle{\par\end{flushleft}\vskip 0.5em
}
\preauthor{\begin{flushleft}}
\postauthor{
\\
\vspace{0.5em}
\footnotesize{Statslab, DPMMS, University of Cambridge.\\ Email: 
\href{mailto:t.hutchcroft@maths.cam.ac.uk}{t.hutchcroft@maths.cam.ac.uk}}
\end{flushleft}}
\predate{\begin{flushleft}}
\postdate{\par\end{flushleft}}
\title{\bf Transience and recurrence of sets for branching random walk via non-standard stochastic orders}

\renewenvironment{abstract}
 {\par\noindent\textbf{\abstractname.}\ \ignorespaces}
 {\par\medskip}

\titleformat{\section}
  {\normalfont\large \bf}{\thesection}{0.4em}{}

\author{{\bf Tom Hutchcroft}}

\begin{document}

\date{\small{\today}}

\maketitle

\setstretch{1.1}

\begin{abstract}
We study how the recurrence and transience of space-time sets for a branching random walk on a graph depends on the offspring distribution. Here, we say that a space-time set $A$ is \emph{recurrent} 
if it is visited infinitely often almost surely on the event that the branching random walk survives forever, and say that $A$ is \emph{transient} if it is visited at most finitely often almost surely. 
We prove that if $\mu$ and $\nu$ are supercritical offspring distributions with means $\bar \mu < \bar \nu$ then every space-time set that is recurrent with respect to the offspring distribution $\mu$ is also recurrent with respect to the offspring distribution $\nu$ and similarly that every space-time set that is transient with respect to the offspring distribution $\nu$ is also transient with respect to the offspring distribution $\mu$.
To prove this, we introduce a new order on probability measures that we call the \emph{germ order} and prove more generally that the same result holds whenever $\mu$ is smaller than $\nu$ in the germ order. Our work is inspired by the work of Johnson and Junge (AIHP 2018), who used related stochastic orders to study the frog model.
\end{abstract}

\section{Introduction}

Let $P$ be the transition matrix of a Markov chain on a countable state space $S$ and let $\mu$ be an \emph{offspring distribution}, i.e., a probability measure on $\{0,1,\ldots\}$. The \textbf{branching Markov process} associated to the pair $(P,\mu)$  is a Markov process taking values in the space of finitely-supported functions $S\to \{0,1,2,\ldots\}$, where we think of elements of this space as encoding the number of particles occupying each state $S$. We begin with a single particle occupying some state $x\in S$. At each step of the process, each particle splits into a random number of new particles with law $\mu$, each of which immediately  performs an independent step of the underlying Markov chain with transition matrix $P$.
Branching Markov processes can also be described equivalently as Markov chains indexed by Galton-Watson trees \cite{MR1258875,MR1254826}. 
When $P$ is the transition matrix of simple random walk on a graph, the branching Markov process is referred to as \textbf{branching random walk}. Even if one is only interested in branching random walks, one is often led to consider the space-time version of the branching random walk, which is itself a branching Markov process.


It is a classical fact \cite[Chapter 5]{LP:book} that if $\mu(1)<1$ then the branching Markov process has a positive probability to survive forever if and only if the mean $\bar\mu$ of  $\mu$ satisfies $\bar \mu > 1$, in which case we say $\mu$ is \textbf{supercritical}. Many further questions of interest in the study of branching random walk, arising e.g. in the study of the asymptotics of the maximum displacement \cite{MR370721,MR400438,MR464415,MR1797310,MR3444654} or the intersections of  two independent branching random walks \cite{MR2914859,MR3333735,MR3644010,MR4146533}, can be formulated in terms of whether some \emph{subset} $A$ of $S$ or $S \times \Z$ is visited infinitely often by the branching Markov process or space-time
 branching Markov process as appropriate. See \cref{sec:examples} and e.g.\ \cite{MR1349163} for examples. 
 The case in which $A$ is a singleton is very well understood, see \cite{MR1254826,MR2426846,MR2284404}.
In this note, we study the effect of changing the offspring distribution $\mu$ on the transience and recurrence of such subsets $A \subseteq S$.

Let us first write down some relevant definitions. Fix a transition matrix $P$ on a countable state space $S$. For each offspring distribution $\mu$ and $x\in S$, let $\bP_x^\mu=\bP_x^{P,\mu}$ be the law of the associated branching Markov process $(B_n)_{n\geq 0}$ started with a single particle at the state $x$. For each subset $A$ of the state space $S$, we write $L(A)= \sum_{a\in A}\sum_{n\geq 0} B_n(a)$ for the total number of particles that ever visit $A$. 
Let $\Omega_\infty$ be the event that the branching random walk survives forever. We say that a set $A \subseteq S$ is $\mu$\textbf{-recurrent} if $\bP^\mu_x(\Omega_\infty) >0$ and
\[
\bP_x^{\mu}(L(A)=\infty \mid \Omega_\infty) = 1 \qquad \text{ for every $x\in S$}
\]
and similarly that $A$ is $\mu$\textbf{-transient} if
\[
 \bP_x^{\mu}(L(A)=\infty) = 0 \qquad \text{ for every $x\in S$.}
\]
Note that $\mu$-transience and $\mu$-recurrence are mutually exclusive but not mutually exhaustive: For example, for branching random walk on a binary tree with  mean offspring $\bar \mu = 1+\eps$, the set of vertices descended from the left-most child of the root has probability strictly between $0$ and $1$ to be visited infinitely often on the event $\Omega_\infty$ and is therefore neither recurrent nor transient by these definitions when $\eps>0$ is sufficiently small.

The simplest form of our theorem is as follows. 

\begin{thm}
\label{thm:simple}
Let $P$ be a transition matrix on a countable state space $S$ and let $\mu$ and $\nu$ be supercritical offspring distributions with means $\bar \mu < \bar \nu$. Then every $\nu$-transient set is $\mu$-transient and every $\mu$-recurrent set is $\nu$-recurrent.
\end{thm}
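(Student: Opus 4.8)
The plan is to deduce \cref{thm:simple} from the germ-order comparison promised in the abstract. First I would fix the order: writing $f_\mu(s)=\sum_{k\ge 0}\mu(k)s^k$ for the probability generating function and $\hat f_\mu(s):=1-f_\mu(1-s)$ — an increasing, concave function with $\hat f_\mu(0)=0$ and (one-sided) derivative $\hat f_\mu'(0)=\bar\mu$ — I will declare that $\mu$ is below $\nu$ in the germ order, $\mu\preceq\nu$, when $\hat f_\mu\le\hat f_\nu$ on some interval $[0,\delta]$; equivalently, when the germ of $f_\mu$ at $1^-$ dominates that of $f_\nu$. Since $f_\mu-f_\nu$ vanishes at $s=1$ and has one-sided derivative $\bar\mu-\bar\nu<0$ there, it is strictly positive just to the left of $1$, so $\bar\mu<\bar\nu$ implies $\mu\preceq\nu$; it therefore suffices to prove both halves of the theorem under the weaker hypothesis $\mu\preceq\nu$.

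Next I would set up the combinatorial machinery. Decomposing $L(A)$ over the first generation shows that, for each offspring law $\rho$, the functions $x\mapsto\bP_x^\rho(\Omega_\infty^c)$, $x\mapsto\bP_x^\rho(L(A)=0)$ and $x\mapsto\bP_x^\rho(L(A)=\infty)$ are fixed points of operators built from $h\mapsto f_\rho(Ph)$, modified by killing on $A$ where appropriate; one can then express ``$A$ is $\mu$-transient'' and ``$A$ is $\mu$-recurrent'' as, respectively, the nonexistence of a nontrivial solution, and the uniqueness of the solution, of suitable such equations. The key analytic input is concavity of $\hat f_\mu$, in the form $\hat f_\mu(\lambda t)\ge\lambda\hat f_\mu(t)$ for $\lambda\in[0,1]$: it lets one shrink any witness to $A$ not being $\mu$-transient (respectively, not $\mu$-recurrent) — a nontrivial sub- or super-solution of the relevant equation — until its values lie in $[0,\delta]$, where the germ inequality $\hat f_\mu\le\hat f_\nu$ applies, after which the comparison can be pushed to the corresponding $\nu$-equation by monotone iteration. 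The probabilistic shadow of this is a coupling of the $(P,\mu)$- and $(P,\nu)$-branching random walks on a single probability space in which a copy of the smaller-offspring process sits inside the larger one; the whole point of the germ order — as opposed to stochastic domination of $\mu$ by $\nu$, which $\bar\mu<\bar\nu$ does \emph{not} provide — is that it is exactly the hypothesis under which such a coupling can be built, at the cost of grouping the larger process into finite blocks and tolerating a random time-change.

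Granting this, the transience direction ($\nu$-transient $\Rightarrow$ $\mu$-transient) is the easy one. From a nontrivial solution of the $\mu$-equation one shrinks it to a subsolution in $[0,\delta]$, transfers it via the germ inequality, and iterates the monotone $\nu$-operator upward to obtain a nontrivial solution of the $\nu$-equation — the contrapositive of the claim. Probabilistically, the embedded $\mu$-walk enters $A$ at most as often as the host $\nu$-walk (after accounting for the time-change), so $L(A)<\infty$ almost surely under $\nu$ forces $L(A)<\infty$ almost surely under $\mu$.

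The recurrence direction ($\mu$-recurrent $\Rightarrow$ $\nu$-recurrent) is where I expect the main obstacle. Here one must show that on the event that the $(P,\nu)$-branching random walk survives it visits $A$ infinitely often almost surely, using only the same statement for $\mu$. The difficulty is that the embedded $\mu$-walk can die out on part of the $\nu$-survival event, so $\mu$-recurrence cannot be quoted for it directly; instead one has to exploit the sheer size of a surviving $\nu$-walk, which at infinitely many stopping times offers many particles from which to attempt to grow a surviving embedded $\mu$-walk. If each such attempt succeeds with conditional probability bounded below — and this is exactly where the precise, non-standard strength of the germ order (rather than just $\bar\mu<\bar\nu$) is needed — then a surviving $\mu$-walk eventually appears, and $\mu$-recurrence of $A$, applied after undoing the time-change, forces it, and hence the $\nu$-walk, back to $A$ infinitely often. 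Showing that the time-change does not spoil the recurrence statement, and that the embedded walks survive with positive probability conditionally on the host walk surviving, are the technical cruxes; this step is the branching-random-walk analogue of the stochastic-order couplings used by Johnson and Junge for the frog model.
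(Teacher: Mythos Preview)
Your reduction to the germ order is exactly what the paper does: $\bar\mu<\bar\nu$ forces $f_\mu\ge f_\nu$ on a left neighbourhood of $1$, and then one proves the full statement under $\mu\leq_{\mathrm{germ}}\nu$. The trouble is in how you propose to prove the germ-order statement.

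The analytic route you sketch has a real gap. The function $u(x)=\bP_x^\mu(L(A)=\infty)$ does satisfy $u=\hat f_\mu(Pu)$, but so does the survival probability and, in general, many other functions; crucially, this equation does \emph{not see} $A$ at all. Your shrinking step is fine---$\lambda u$ is a subsolution by concavity, and on $[0,\delta]$ one can replace $\hat f_\mu$ by $\hat f_\nu$---but iterating the $\nu$-operator from $\lambda u$ only produces \emph{some} fixed point of $w=\hat f_\nu(Pw)$, and you have no mechanism to identify it with $\bP_x^\nu(L(A)=\infty)$ rather than with, say, $\bP_x^\nu(\Omega_\infty)$, which is always positive and tells you nothing about $A$. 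The same objection applies, mutatis mutandis, to the recurrence direction. Separately, the assertion that the germ order is ``exactly the hypothesis under which'' one can embed a $\mu$-process inside a time-changed $\nu$-process is not substantiated; since $\mu$ may have much heavier tails than $\nu$ (e.g.\ $\mu$ supported on $\{0,10^6\}$ with mean $1.5$, $\nu=\delta_2$), no finite block size lets a single $\mu$-step fit inside a $\nu$-block, and it is not clear how a random time-change saves this.

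The paper avoids both difficulties by a different device. It does \emph{not} couple. Instead it runs a truncated recursion on $[\alpha,1]^S$ (where $\alpha$ is a point at which $\cP_\mu\ge\cP_\nu$), taking maxima with $\alpha$ at every step so that the germ inequality is always applicable, and then \emph{identifies the fixed point probabilistically} by showing it coincides with $\bE_x^\mu[\alpha^{E(D)}]$ for an enlarged target set $D\supseteq A$; this yields the key inequality
\[
\bE_x^\mu\bigl[t^{\mathbbm{1}(L(A)>0)}\bigr]\;\ge\; t\vee\bE_x^\nu\bigl[t^{L(A)}\bigr]\qquad (1-\eps\le t\le 1).
\]
For transience one then passes to the space-time process and applies this to the tail sets $A_k=A\cap(S\times[k,\infty))$, so that the ``hit $A_k$'' probabilities (which \emph{do} see $A$) converge to the ``hit $A$ infinitely often'' probability. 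For recurrence the same inequality gives a bound $\bP_x^\nu(L(A)=0)\le tp+(1-p)<1$ \emph{uniform in $x$}, and the conclusion follows from the Markov property and $|B_n|\to\infty$ on $\Omega_\infty$. The uniform $<1$ bound is the substitute for your ``attempt to grow an embedded $\mu$-walk''; it is obtained analytically, not by coupling.
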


This theorem has the following immediate corollary.

\begin{corollary}
Let $P$ be a transition matrix on a countable state space $S$. For each $A \subseteq S$ there exist $\lambda_t(A) \leq \lambda_r(A) \in [1,\infty]$ such that the following hold for each offspring distribution $\mu$ with mean $\bar\mu$:
\begin{enumerate}
	\item If $\bar \mu < \lambda_t(A)$ then $A$ is $\mu$-transient.
	\item If $\bar \mu > \lambda_t(A)$ then $A$ is not $\mu$-transient.
	\item If $\bar \mu < \lambda_r(A)$ then $A$ is not $\mu$-recurrent.
	\item If $\bar \mu > \lambda_r(A)$ then $A$ is $\mu$-recurrent.
\end{enumerate}
\end{corollary}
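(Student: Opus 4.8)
The plan is to define the thresholds as the obvious extrema and then read all four assertions off \cref{thm:simple}. Set
\[
\lambda_t(A):=\sup\{\bar\mu:\mu\text{ is supercritical and }A\text{ is }\mu\text{-transient}\}
\]
and
\[
\lambda_r(A):=\inf\{\bar\mu:\mu\text{ is supercritical and }A\text{ is }\mu\text{-recurrent}\},
\]
with the conventions $\sup\emptyset=1$ and $\inf\emptyset=\infty$. Every supercritical offspring distribution has mean strictly greater than $1$, and conversely every real number $m>1$ is the mean of some supercritical distribution (e.g.\ one supported on $\{1,2\}$ if $m\le 2$ and on $\{0,\lceil m\rceil\}$ otherwise), so the two defining sets are, up to their extrema, intervals of the form $(1,c)$ or $(1,c]$; in particular $\lambda_t(A),\lambda_r(A)\in[1,\infty]$.

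First I would verify $\lambda_t(A)\le\lambda_r(A)$: otherwise there would be supercritical $\mu,\nu$ with $A$ being $\mu$-transient, $A$ being $\nu$-recurrent, and $\bar\nu<\bar\mu$, and applying \cref{thm:simple} to the pair $(\nu,\mu)$ (with $\nu$ the distribution of smaller mean) would make the $\nu$-recurrent set $A$ also $\mu$-recurrent, contradicting that $\mu$-transience and $\mu$-recurrence are mutually exclusive. Next fix an offspring distribution $\mu$ and assume first that it is supercritical. Assertions (2) and (3) are immediate from the definitions: if $A$ were $\mu$-transient then $\bar\mu$ would belong to the set defining $\lambda_t(A)$, so $\bar\mu\le\lambda_t(A)$, contrary to the hypothesis of (2); and symmetrically for (3). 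For (1), if $\bar\mu<\lambda_t(A)$ then by definition of the supremum there is a supercritical $\nu$ with $A$ being $\nu$-transient and $\bar\mu<\bar\nu$, so \cref{thm:simple} applied to $(\mu,\nu)$ gives that $A$ is $\mu$-transient; (4) is the mirror image, using a supercritical $\nu$ with $A$ being $\nu$-recurrent and $\bar\nu<\bar\mu$ together with \cref{thm:simple} applied to $(\nu,\mu)$. Since the corollary asserts nothing when $\bar\mu$ equals one of the thresholds, the supercritical case is complete.

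It remains to treat $\mu$ that is not supercritical. If $\mu(1)<1$ and $\bar\mu\le 1$ the process dies out almost surely, so $L(A)<\infty$ almost surely (this is (1)) and $\bP^\mu_x(\Omega_\infty)=0$, so $A$ is not $\mu$-recurrent (this is (3)), while the hypotheses of (2) and (4) fail because $\bar\mu\le 1\le\lambda_t(A)$ and $\bar\mu\le 1\le\lambda_r(A)$. If $\mu(1)=1$ the branching Markov process is a single copy of the underlying Markov chain $X$ and $\bar\mu=1$, so again the hypotheses of (2) and (4) fail, and (1) and (3) amount to the statements that $\lambda_t(A)>1$ forces $X$ from every state to visit $A$ only finitely often almost surely, and that $\lambda_r(A)>1$ forces $X$ to visit $A$ finitely often with positive probability from some state. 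I would deduce both from the observation that, on the survival event $\Omega_\infty$ of a supercritical branching Markov process, one can select a ray $v_0,v_1,\dots$ of the underlying Galton--Watson tree by a rule depending on the tree structure alone (for instance, always passing to some child with an infinite subtree); since the motion of the particles is independent of the branching structure, the positions of $v_0,v_1,\dots$ form a copy of $X$ started at $x$ that is independent of $\Omega_\infty$, and this ray visits $A$ infinitely often only if $L(A)=\infty$. Hence for every supercritical $\nu$,
\[
\bP^\nu_x(L(A)=\infty)\ \ge\ \bP^\nu_x(\Omega_\infty)\,\bP_x\bigl(X\text{ visits }A\text{ infinitely often}\bigr),
\]
and likewise $\bP^\nu_x(L(A)=\infty\mid\Omega_\infty)\ge\bP_x\bigl(X\text{ visits }A\text{ infinitely often}\bigr)$. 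The first inequality shows that if $A$ is $\nu$-transient for some supercritical $\nu$ — which holds whenever $\lambda_t(A)>1$ — then $X$ visits $A$ finitely often almost surely from every state, giving (1); the contrapositive of the second shows that if $X$ visited $A$ infinitely often almost surely from every state then $A$ would be $\nu$-recurrent for every supercritical $\nu$, so $\lambda_r(A)=1$, which is the contrapositive of (3).

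Essentially everything here is bookkeeping on top of \cref{thm:simple}, so I expect the threshold definitions, the mutual-exclusivity step, and the three non-vacuous deductions to be routine. The only genuine (if elementary) obstacle is the deterministic case $\mu(1)=1$, where one must relate the single Markov chain to the branching process; this is handled by the ray observation above, and one could alternatively state the corollary only for supercritical $\mu$ and omit that step.
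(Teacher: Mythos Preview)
Your proposal is correct and follows the natural route: the paper simply labels the result an ``immediate corollary'' of \cref{thm:simple} and gives no proof, and your definitions of $\lambda_t(A)$ and $\lambda_r(A)$ together with the four deductions are exactly the expected argument. The only point worth flagging is that you go further than the paper by explicitly treating non-supercritical $\mu$, including the degenerate case $\mu(1)=1$; your ray argument there is sound (the Galton--Watson tree and the step variables are independent, so conditioning on the tree and selecting a ray measurably from it yields a genuine copy of the underlying chain), and this neatly ties the $\mu=\delta_1$ case back to the supercritical one. The paper, by contrast, leaves these boundary cases implicit.
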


Related results for branching random walks on $\R$ with i.i.d.\ increments have been proven under a second moment assumption by Pemantle and Peres \cite{MR1349163}. 
 Their proof is completely different to ours and deals with \emph{ray-transience} (i.e., the non-existence of infinite lines of descent intersecting $A$ infinitely often) rather than transience as we define it here.


\textbf{Non-standard stochastic orders.}
We will deduce \cref{thm:simple} as a special case of a more general theorem involving a new partial ordering of the set of offspring distributions that we call the \emph{germ ordering}. Our arguments are inspired by the work of Johnson and Junge \cite{MR3795075} (see also \cite{MR3706732}), who studied the dependence of the \emph{frog model} on the its particle distribution using a non-standard stochastic order they called the \emph{pgf ordering}. 
 Given a probability measure $\mu$ on $[0,\infty)$,  the probability generating function (pgf) of $\mu$ is defined to be $\cP_\mu(t) = \int_0^\infty t^x \dif \mu(x)$ for each $t \in [0,1]$. The \textbf{pgf ordering} $\leq_\mathrm{pgf}$ on the set of probability measures on $[0,\infty]$ is defined by
\[
\Bigl(\mu \leq_\mathrm{pgf} \nu\Bigr) \iff \Bigl( \cP_\mu(t)  \geq \cP_\nu(t) \text{ for every $0\leq t\leq 1$}\Bigr)
\]
for each two probability measures $\mu$ and $\nu$ on $[0,\infty)$. The pgf order was also previously studied in the context of signal processing and wireless networks \cite{tepedelenlioglu2011applications,MR3160309}, where it is known as \emph{Laplace transform order}. The pgf order is just one example from among a number of non-standard stochastic orders that have recently found applications to many problems throughout the literature, which we now briefly review. Recall that the \textbf{standard stochastic order} (a.k.a. stochastic domination) is defined by $\mu \leq_\mathrm{st} \nu$ if and only if $\int f(t) \dif \mu(t) \leq \int f(t) \dif \nu(t)$ for every increasing function $f$, and is used very widely throughout probability theory. The \textbf{increasing concave (icv) order} is defined by $\mu \leq_\mathrm{icv} \nu$ if and only if $\int f(t) \dif \mu(t) \leq \int f(t) \dif \nu(t)$ for every increasing concave function $f$, and has recently found important applications to first passage percolation~\cite{MR1925450,MR1202515}, reinforced random walks \cite{poudevigne2019monotonicity}, the frog model \cite{MR3795075}, and random walk in random potential \cite{MR1620370}.
See \cite{MR3795075} for a detailed discussion of these orders and how they compare to each other.


\textbf{The germ order}.
 We now introduce the new order on probability distributions that we consider.
We define the \textbf{germ order} $\leq_\mathrm{germ}$ on the set of probability measures on $[0,\infty)$ by
\[
\Bigl(\mu \leq_\mathrm{germ} \nu \Bigr) \iff
\Bigl( \text{there exists $\eps>0$ such that }  \cP_\mu(t) \geq \cP_\nu(t) \text{ for every $1-\eps \leq t\leq 1$}
\Bigr)
\]
for each two probability measures $\mu$ and $\nu$ on $[0,\infty)$. The germ order refines the pgf order in the sense that any two probability measures $\mu$ and $\nu$ satisfying $\mu \leq_\mathrm{pgf} \nu$ must also trivially satisfy $\mu \leq_\mathrm{germ} \nu$. Indeed, we have moreover that
\[
\Bigl(\mu \leq_\mathrm{st} \nu \Bigr) \Rightarrow \Bigl(\mu \leq_\mathrm{icv} \nu \Bigr) \Rightarrow \Bigl(\mu \leq_\mathrm{pgf} \nu \Bigr) \Rightarrow \Bigl(\mu \leq_\mathrm{germ} \nu \Bigr).
\]
However, the germ order is much finer than the pgf order in the sense that many more pairs of measures can be compared. Indeed, if $\mu$ has finite $p$th moment $\sum_{n\geq 0} n^p \mu(n)$ for some $p\geq 1$ then we have by a standard computation that
\begin{equation}
\label{eq:Taylor}
\cP_\mu(e^{-\eps})= 1+\sum_{k=1}^p \frac{(-\eps)^k}{k!} \sum_{n\geq 0} n^k \mu(n) + o\left(\eps^p\right)\qquad \text{ as $\eps \downarrow 0$}.
\end{equation}
It follows in particular that if the means of $\mu$ and
$\nu$ satisfy a strict inequality 
$\bar \mu < \bar \nu$ then $\mu \leq_\mathrm{germ} \nu$. Similarly, if $\mu$ and $\nu$ have the same finite mean $\bar \mu = \bar \nu$ but the second moment of $\mu$ is strictly larger than that of $\nu$ then $\mu \leq _\mathrm{germ} \nu$. (Be careful to note that having a large second moment makes $\mu$ \emph{smaller} with respect to the germ order when the first moment is fixed!) Similar statements hold for higher moments, so that, for example, if $\mu$ and $\nu$ have the same first and second moments but the third moment of $\nu$ is strictly larger than that of $\mu$ then $\mu \leq_\mathrm{germ} \nu$. Moreover, when restricted to the class of measures with, say, moments of all orders satisfying Carleman's condition $\sum_{p\geq 1} (p$th moment$)^{-1/2p}=\infty$, the germ order is equivalent to the lexicographical order associated to the alternating sequences (first moment, $-($second moment$)$, third moment, $-($fourth moment$)$, $\ldots$\,). It follows that the germ order is a total order when restricted to this class of measures, so that if $\mu$ and $\nu$ both have moments of all orders satisfying Carleman's condition then either $\mu \leq_\mathrm{germ} \nu$ or $\nu \leq_\mathrm{germ} \mu$. This follows \eqref{eq:Taylor} together with Carleman's classical theorem  \cite{akhiezer2020classical} that if $\mu$ and $\nu$ are probability measures whose moment sequences are equal and satisfy his condition then $\mu=\nu$. More generally, one expects to be able to compare most non-pathological pairs of measures arising in examples. Among the probability measures on $[0,\infty)$ with mean $m<\infty$, the atom at $m$ is maximal with respect to the germ order. Similarly, among the probability measures on $\{0,1,\ldots\}$ with mean $m<\infty$, the unique such measure that is supported on $\{\lfloor m \rfloor,\lceil m \rceil\}$ is maximal with respect to the germ order.


We are now ready to state the most general version of our theorem. 

\begin{thm}
\label{thm:germ}
Let $P$ be a transition matrix on a countable state space $S$ and let $\mu$ and $\nu$ be supercritical offspring distributions such that $\mu \leq_\mathrm{germ}  \nu$. Then every $\nu$-transient set is $\mu$-transient and every $\mu$-recurrent set is $\nu$-recurrent.
\end{thm}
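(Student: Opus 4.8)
The plan is to reduce the statement to an analytic dichotomy for the hitting function, recast both implications as a single pointwise comparison of conditional hitting probabilities, and then obtain that comparison by a multi-scale coupling in the spirit of Johnson and Junge.

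First I would set up the basic recursion. For an offspring distribution $\mu$ and a space-time set $A$, write $h^\mu_A(x,n) = \mathbf P^\mu_{(x,n)}(L(A)=\infty)$ for the probability that the branching Markov process started from a single particle at $x$ at time $n$ visits $A$ infinitely often. Conditioning on the first generation gives $1-h^\mu_A = f_\mu\circ P\circ(1-h^\mu_A)$, where $f_\mu=\mathcal P_\mu$ and $P$ is understood to act on the space-time kernel; iterating this, $1-h^\mu_A(x,n)=\mathbf E^\mu_{(x,n)}\bigl[\prod_{v}(1-h^\mu_A(\mathrm{pos}(v),n+k))\bigr]$, the product running over the particles $v$ of generation $k$. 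Since $\{L(A)=\infty\}\subseteq\Omega_\infty$ and the survival probability equals $1-q_\mu$ (with $q_\mu$ the extinction probability) independently of the starting point, this last identity shows that if $\delta:=\inf_{(x,n)}h^\mu_A(x,n)>0$ then the product is at most $(1-\delta)^{N_k}$, where $N_k$ is the number of generation-$k$ particles; as $N_k\to\infty$ almost surely on $\Omega_\infty$ and $N_k=0$ eventually on $\Omega_\infty^c$, bounded convergence forces $1-h^\mu_A\equiv q_\mu$ and hence $A$ to be $\mu$-recurrent. Together with the converse (a $\mu$-recurrent set has $h^\mu_A\equiv 1-q_\mu$) this yields the dichotomy: $A$ is $\mu$-transient iff $h^\mu_A\equiv 0$, and $A$ is $\mu$-recurrent iff $\inf h^\mu_A>0$.

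Writing $r^\mu_A(x,n)=h^\mu_A(x,n)/(1-q_\mu)=\mathbf P^\mu_{(x,n)}(L(A)=\infty\mid\Omega_\infty)$, both conclusions of the theorem follow from the single pointwise inequality $r^\mu_A\le r^\nu_A$ on $S\times\Z$: if $A$ is $\nu$-transient then $r^\nu_A\equiv 0$, hence $r^\mu_A\equiv 0$ and $A$ is $\mu$-transient; if $A$ is $\mu$-recurrent then $r^\mu_A\equiv 1$, hence $r^\nu_A\equiv 1$ and $A$ is $\nu$-recurrent. So it suffices to show that, when $\mu\leq_\mathrm{germ}\nu$, from every space-time point the $\nu$-branching random walk conditioned to survive is at least as likely to visit $A$ infinitely often as the $\mu$-branching random walk conditioned to survive. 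I would prove this by coupling. The naive attempt---to realize the conditioned $\mu$-process as a sub-process of the conditioned $\nu$-process, so that its visits to $A$ are inherited---fails, because $\mu\leq_\mathrm{germ}\nu$ only controls probability generating functions near $1$ and gives no stochastic domination of offspring laws (it need not even imply $q_\mu\ge q_\nu$). My plan is, first, to pass to the backbone decomposition of the two processes conditioned on $\Omega_\infty$ (backbone plus finite bushes), which reduces to two processes surviving almost surely; since $\mu\leq_\mathrm{germ}\nu$ forces $\bar\mu\le\bar\nu$ and the backbone offspring laws have means $\bar\mu$ and $\bar\nu$, they remain germ-comparable by the mean criterion following \eqref{eq:Taylor} when $\bar\mu<\bar\nu$ (the case $\bar\mu=\bar\nu$ needing a separate second-moment check). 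Second, to couple the two surviving processes over macro-steps of $k\gg 1$ generations: by induction on $k$---using $f_\mu^{\circ k}(t),f_\nu^{\circ k}(t)\to 1$ as $t\uparrow 1$ and monotonicity of $f_\mu$, so that for $t$ close enough to $1$ the first $k$ iterates of both stay in the interval where the hypothesis applies---generation $k$ of the $\mu$-process is still $\leq_\mathrm{germ}$ that of the $\nu$-process, and for $k$ large the $\nu$-macro-cloud so dwarfs what a $\mu$-macro-step needs that the required $\mu$-structure embeds inside it with conditional probability bounded below; a renormalization over scales, using the dichotomy above to promote ``positive probability'' to the needed almost-sure statement, then completes the coupling.

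The main obstacle is exactly this last point: the germ order is strictly weaker than stochastic domination, so there is genuinely no monotone coupling of individual branching steps, and one is forced into a multi-scale embedding. Matching branching sizes and random-walk positions simultaneously, and summing over the rare scales at which the embedding fails, is the technical heart of the argument, and the role of the germ hypothesis is precisely to make the macro-step comparison hold uniformly in the block length $k$. A secondary difficulty is the conditioning on survival in the regime $\bar\mu=\bar\nu$, where the extinction probabilities may genuinely differ and the comparison of the backbone (and bush) offspring laws must be carried out via higher moments rather than the mean alone.
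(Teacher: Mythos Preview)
Your dichotomy (if $\inf_{(x,n)} h^\mu_A > 0$ then $A$ is $\mu$-recurrent) and the reduction of both implications to a single comparison are correct, and the paper uses essentially the same structural step at the end of its argument. The gap is in your proposed proof of the comparison itself. The multi-scale coupling does not go through: showing that the generation-$k$ population sizes remain germ-comparable (which you verify correctly by induction on the iterates $f_\mu^{\circ k}, f_\nu^{\circ k}$) still gives you nothing resembling stochastic domination at any scale $k$, so there is no mechanism for embedding a $\mu$-cloud, with its positions in $S$, inside a $\nu$-cloud. When $\bar\mu = \bar\nu$ the two macro-clouds have the same expected size and your ``dwarfing'' heuristic is simply false; and even when $\bar\mu < \bar\nu$, having many more $\nu$-particles gives no control over where they sit, so there is no reason the $\nu$-process covers the positions occupied by the $\mu$-process. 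Embedding one branching random walk inside another so that visits are inherited really requires stochastic domination of the offspring laws (to select, at each vertex, a sub-family of children with the right law), and this is exactly what the germ order fails to provide. The backbone passage does not help here: it preserves the means, so after it you are still only comparing via germ order, not via any order that supports a coupling.

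The paper's route is entirely analytic and avoids coupling. Rather than iterating the recursion for $\bE^\mu_x[t^{L_n(A)}]$ (which breaks once values leave the interval near $1$ on which the germ hypothesis holds), one iterates a \emph{clamped} recursion in which all values are forced to stay in $[\alpha,1]$, with $\alpha$ chosen so that $\mathcal P_\mu \geq \mathcal P_\nu$ on $[\alpha,1]$. The limit of this clamped iteration is a fixed point that one can identify probabilistically, yielding the single inequality
\[
\bE^\mu_x\bigl[t^{\mathbbm{1}(L(A)>0)}\bigr] \;\geq\; t \vee \bE^\nu_x\bigl[t^{L(A)}\bigr]
\]
for all $t$ close to $1$, uniformly over $A$ and $x$. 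One then applies this to the space-time tails $A_k = A \cap (S\times[k,\infty))$ and lets $k\to\infty$: the left side converges to $\bP^\mu_x(L(A)<\infty)$ and the right side to $t \vee \bP^\nu_x(L(A)<\infty)$, after which your own dichotomy finishes both directions. No special treatment of the equal-mean case is needed.
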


\begin{remark}
It is a theorem of Johnson and Rolla \cite{MR3962479} that the analogue of \cref{thm:simple} does \emph{not} hold for the recurrence of the frog model. This suggests that the germ order is of limited applicability to that model. It would be interesting to see if the germ order has applications to some of the other models mentioned above or, say, to activated random walk \cite{rolla2020activated,rolla2017universality}.
\end{remark}

\cref{thm:germ} implies various related statements concerning alternative notions of transience and recurrence. Let us now highlight one such variation that will be useful when studying the Minkowski dimensions of limit sets in Example~\ref{example:dimension}.

\begin{theorem}
\label{thm:variations}
Let $P$ be a transition matrix on a countable state space $S$, let $\{A_i : i \in I\}$ be a countable collection of $S$, and let $\mu \leq_\mathrm{germ} \nu$ be two offspring distributions. Let $(B_n)_{n\geq 0}$ be a branching Markov process and let $I_\infty=\{ i \in I : A_i$ is visited infinitely often by $B\}$.
\begin{enumerate}
\item If $I_\infty$ is infinite $\bP^\mu_x$-a.s.\ on the event $\Omega_\infty$ for every $x\in S$ then $I_\infty$ is also infinite $\bP^\nu_x$-a.s.\ on the event $\Omega_\infty$ for every $x\in S$.
\item If $I_\infty$ is finite $\bP^\nu_x$-a.s.\ for every $x\in S$ then $I_\infty$ is also finite $\bP^\mu_x$-a.s.\ for every $x\in S$.
\end{enumerate}
\end{theorem}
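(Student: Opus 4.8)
The plan is to prove \cref{thm:variations} in tandem with \cref{thm:germ}, the shared engine being a coupling. For each $x\in S$ I would construct a probability space carrying a $\mu$-branching Markov process $\mathbf B=(B_n)_{n\geq0}$ and a $\nu$-branching Markov process $\mathbf B'=(B'_n)_{n\geq0}$, both started from a single particle at $x$, such that almost surely every particle of $\mathbf B$ sits at a space-time location $(s,n)$ that is also occupied by a particle of $\mathbf B'$. The role of the hypothesis $\mu\leq_\mathrm{germ}\nu$ is precisely to make this embedding possible (heuristically: the $\nu$-process dominates the $\mu$-process), and producing it from only the behaviour of $\cP_\mu$ and $\cP_\nu$ near $1$ — rather than from the much stronger $\mu\leq_\mathrm{st}\nu$ — is the main obstacle; I expect it to require a multi-scale argument comparing the iterates of $\cP_\mu$ and $\cP_\nu$ across many generations, using supercriticality of both distributions. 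Once the coupling is in place it yields $L(A)(\mathbf B)\leq L(A)(\mathbf B')$ for every $A\subseteq S$, and hence, writing $I_\infty(\mathbf B)=\{i\in I: L(A_i)(\mathbf B)=\infty\}$ and likewise for $\mathbf B'$, the almost sure inclusion $I_\infty(\mathbf B)\subseteq I_\infty(\mathbf B')$.

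Part~(2) is then immediate: $\bP_x^\mu(|I_\infty|=\infty)=\P(|I_\infty(\mathbf B)|=\infty)\leq\P(|I_\infty(\mathbf B')|=\infty)=\bP_x^\nu(|I_\infty|=\infty)$ for every $x$, so if the right-hand side vanishes for all $x$ then so does the left. This is the easy direction, parallel to ``$\nu$-transient $\Rightarrow$ $\mu$-transient'' in \cref{thm:germ}.

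Part~(1) needs one further idea, since the coupling alone gives only $\bP_x^\nu(|I_\infty|=\infty)\geq\bP_x^\mu(\Omega_\infty)=1-q_\mu$, where $q_\rho$ denotes the extinction probability of a Galton--Watson process with offspring $\rho$, and typically $q_\mu>q_\nu$. To close the gap, set $F(x)=\bP_x^\nu(|I_\infty|<\infty)$ and decompose the $\nu$-process at its first generation. Since $L(A_i)$ of the whole process equals $\mathbf 1[x\in A_i]$ plus the sum of the $L(A_i)$'s of the finitely many subprocesses rooted at the children of the initial particle, the set $I_\infty$ of the whole process is the union of the $I_\infty$'s of those subprocesses; as $A_i\subseteq S$ (so a time shift is irrelevant), it follows that $F$ satisfies the same fixed-point equation $F(x)=\bE_x^\nu\!\left[\prod_{j=1}^N F(Y_j)\right]$ as the extinction probability, where $N$ and $Y_1,\dots,Y_N$ are the number and positions of the children of the root. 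On the other hand, the coupling and the hypothesis of part~(1) give $F(x)\leq\bP_x^\mu(|I_\infty|<\infty)=q_\mu<1$ for every $x$ (the equality because $\{|I_\infty|<\infty\}$ contains the extinction event while $|I_\infty|=\infty$ holds $\bP_x^\mu$-a.s.\ on survival, and $q_\mu<1$ by supercriticality). Iterating the fixed-point equation $n$ times and bounding each factor by $q_\mu$ gives $F(x)\leq\bE_x^\nu[q_\mu^{Z_n}]$, where $Z_n$ is the number of particles at generation $n$; letting $n\to\infty$ and using $Z_n\to\infty$ $\bP_x^\nu$-a.s.\ on $\Omega_\infty$, bounded convergence gives $F(x)\leq\bP_x^\nu(\text{extinction})=q_\nu$. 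As $F(x)\geq q_\nu$ trivially, we conclude $F\equiv q_\nu$, i.e.\ $|I_\infty|=\infty$ $\bP_x^\nu$-a.s.\ on $\Omega_\infty$ for every $x$, which is part~(1).

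Thus essentially all the work is in the coupling, which is exactly what \cref{thm:germ} needs; on top of it, \cref{thm:variations} requires only the observation that $\{|I_\infty|<\infty\}$ is inherited through the genealogy just like $\{L(A)<\infty\}$, after which the extinction-probability argument transfers recurrence across the two survival events as in the proof of \cref{thm:germ}.
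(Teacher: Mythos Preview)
Your central claim---that $\mu\leq_\mathrm{germ}\nu$ yields a coupling in which every space-time site occupied by the $\mu$-process is also occupied by the $\nu$-process---is false. Take $S$ a single point, $\mu=\delta_2$, and $\nu=\tfrac12\delta_0+\tfrac12\delta_6$: then $\bar\mu=2<3=\bar\nu$ so $\mu\leq_\mathrm{germ}\nu$, yet the $\mu$-process survives with probability $1$ while the $\nu$-process dies with positive probability, so no coupling of the kind you describe can exist. More generally, whenever $\nu(0)>\mu(0)$ the $\nu$-process can go extinct in one step while the $\mu$-process does not, blocking any such embedding. Even the weaker consequence you draw, $L(A)(\mathbf B)\leq L(A)(\mathbf B')$, does not follow from occupation-containment alone (it would require $B_n(s)\leq B'_n(s)$ pointwise, not just $B_n(s)>0\Rightarrow B'_n(s)>0$), so there is a second gap in your deduction. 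The germ order is simply too weak for coupling: it compares pgfs only near $1$, and says nothing about domination of the offspring laws themselves.

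The paper circumvents this entirely. The engine is not a coupling but the analytic inequality of \cref{lem:germ_recursion}: one iterates the pgf recursion \emph{truncated below at the threshold $\alpha$} where the germ inequality is valid, and identifies the limiting fixed point with a hitting quantity for the $\mu$-process. This gives $\bE^\mu_x[t^{\mathbbm{1}(L(A)>0)}]\geq t\vee\bE^\nu_x[t^{L(A)}]$ for $t$ near $1$, from which \cref{thm:germ} follows by passing to space-time and using the tail structure of $\{L(A)=\infty\}$. For \cref{thm:variations}, the paper does not repeat a branching/extinction argument as you do; instead it uses Borel--Cantelli twice to manufacture a \emph{single} space-time set $\bigcup_i A_i\cap(S\times[N(i),M(i)])$ whose infinite visitation is almost surely equivalent to $|I_\infty|=\infty$, thereby reducing directly to \cref{thm:germ}. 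Your fixed-point argument for part~(1), incidentally, is sound \emph{given} the input $\bP^\nu_x(|I_\infty|<\infty)\leq q_\mu<1$---but obtaining that input is exactly what requires \cref{lem:germ_recursion} rather than a coupling.
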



\section{Proof}


In this section we prove \cref{thm:germ,thm:variations}. As a warm up, we begin by proving a similar result under the stronger assumption that $\mu \leq_\mathrm{pgf} \nu$, which is much more straightforward.

\begin{lemma}
\label{lemma:pgf_induction}
Let $P$ be a transition matrix with countable state space $S$. If $\mu,\nu$ are offspring distributions such that $\mu \leq_\mathrm{pgf} \nu$ then
\[
\bE_{x}^{\mu}\left[e^{-t L(A)}\right] \geq \bE_{x}^{\nu}\left[e^{-t L(A)}\right]
\]
for every $x \in S$, $A \subseteq S$, and $t \geq 0$. 
\end{lemma}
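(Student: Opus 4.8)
The plan is to fix $t \geq 0$ and prove the inequality for this value of $t$; since the case $t=0$ is trivial we may assume $t>0$ and set $s := e^{-t} \in (0,1)$, so that the claim becomes $\bE_x^\mu[s^{L(A)}] \geq \bE_x^\nu[s^{L(A)}]$ for every $x \in S$ (with the convention $s^\infty = 0$). The proof is a generating-function induction on a time-truncated version of $L(A)$.

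First I would introduce, for each $N \geq 0$, the truncated occupation count $L_N(A) := \sum_{a \in A}\sum_{n=0}^N B_n(a)$, i.e.\ the number of particles visiting $A$ in the first $N$ generations, and set $g^\mu_N(x) := \bE_x^\mu[s^{L_N(A)}] \in [0,1]$. Decomposing the process at the first generation --- the root at $x$ contributes $\mathbbm 1[x\in A]$, it then has a $\mu$-distributed number of children, each child performs an independent $P$-step, and the subtrees rooted at the children are conditionally independent branching Markov processes started from the childrens' locations --- one has $L_N(A) = \mathbbm 1[x \in A] + \sum_{i} L^{(i)}_{N-1}(A)$ for $N \geq 1$, and hence
\[
g^\mu_N(x) = s^{\mathbbm 1[x \in A]}\,\cP_\mu\!\left(\sum_{y \in S} P(x,y)\, g^\mu_{N-1}(y)\right), \qquad N \geq 1,
\]
with base case $g^\mu_0(x) = s^{\mathbbm 1[x\in A]}$; every sum appearing here is over nonnegative summands bounded by $1$, so there is no issue of convergence or of interchanging expectation and summation, and the same recursion holds verbatim with $\mu$ replaced by $\nu$.

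Next I would show $g^\mu_N(x) \geq g^\nu_N(x)$ for all $x \in S$ and all $N \geq 0$ by induction on $N$. The case $N=0$ is an equality. For the inductive step, the hypothesis $g^\mu_{N-1}(y) \geq g^\nu_{N-1}(y)$ for all $y$, together with the fact that $P(x,\cdot)$ is a probability measure and all the $g$'s lie in $[0,1]$, gives $u_\mu := \sum_y P(x,y)g^\mu_{N-1}(y) \geq \sum_y P(x,y)g^\nu_{N-1}(y) =: u_\nu$, with $u_\mu, u_\nu \in [0,1]$. Since $\cP_\mu$ is a power series with nonnegative coefficients it is non-decreasing on $[0,1]$, so $\cP_\mu(u_\mu) \geq \cP_\mu(u_\nu)$; and the hypothesis $\mu \leq_{\mathrm{pgf}} \nu$ gives $\cP_\mu(u_\nu) \geq \cP_\nu(u_\nu)$. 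Multiplying through by $s^{\mathbbm 1[x\in A]} \geq 0$ yields $g^\mu_N(x) \geq g^\nu_N(x)$.

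Finally, since $L_N(A) \uparrow L(A)$ pointwise as $N \to \infty$ (a monotone limit of sums of nonnegative integers, possibly equal to $\infty$), we have $s^{L_N(A)} \downarrow s^{L(A)}$ with $0 \leq s^{L_N(A)} \leq 1$, so dominated convergence gives $g^\mu_N(x) \to \bE_x^\mu[s^{L(A)}]$ and $g^\nu_N(x) \to \bE_x^\nu[s^{L(A)}]$; passing to the limit in $g^\mu_N(x) \geq g^\nu_N(x)$ completes the proof. The only point needing genuine care is the clean justification of the first-generation recursion --- the conditional independence of the subtrees and the resulting factorisation of $\bE_x^\mu[s^{L_N(A)}]$ through $\cP_\mu$ --- together with tracking which quantities lie in $[0,1]$ so that monotonicity of $\cP_\mu$ may be invoked; but none of this is deep, which is precisely why this lemma serves as a warm-up before the harder passage from the pgf order to the germ order.
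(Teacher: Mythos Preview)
Your proof is correct and follows essentially the same approach as the paper: both truncate to $L_N(A)$, establish the same first-generation recursion $g_N(x) = s^{\mathbbm 1[x\in A]}\cP_\mu(Pg_{N-1}(x))$, prove the inequality by induction using the pgf order together with monotonicity of the generating function on $[0,1]$, and then pass to the limit. The only cosmetic differences are that you apply monotonicity of $\cP_\mu$ first and then the pgf-order inequality, whereas the paper reverses this order, and you invoke dominated rather than monotone convergence for the final limit.
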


In other words, the laws of the local times at $A$ under the two measures satisfy the same relationship under the pgf order as their associated offspring distributions.

\begin{proof}[Proof of \cref{lemma:pgf_induction}]
Fix $A \subseteq S$. For each $n\geq 0$, let $L_n(A)=\sum_{a\in A}\sum_{m=0}^nB_m(a)$ be the total number of particles that have visited $A$ by time $n$. We will prove by induction on $n$ that
\begin{equation}
\bE_{x}^{\mu}\left[e^{-t L_n(A)}\right] \geq \bE_{x}^{\nu}\left[e^{-t L_n(A)}\right]
\end{equation}
for every $x\in S$, $t\geq 0$, and $n\geq 0$; the result will then follow from the monotone convergence theorem. The base case $n=0$ is trivial since there we have that
\[
\bE_{x}^{\mu}\left[e^{-t L_0(A)}\right] = \bE_{x}^{\nu}\left[e^{-t L_0(A)}\right] = e^{-t}\mathbbm{1}(x \in A) + \mathbbm{1}(x \notin A).
\]
Meanwhile, for $n\geq 1$ we have by the definitions that if $|B_1|=\sum_y B_1(y)$ denotes the number of particles in generation $1$ then
\begin{align}
\bE_{x}^{\mu}\left[e^{-t L_n(A)}\right] 
&= \bE_x^\mu\left[\bE_{x}^{\mu}\left[e^{-t L_n(A)} \;\Big|\; |B_1| \right]  \right]
\nonumber\\
&= \left(e^{-t}\mathbbm{1}(x \in A) + \mathbbm{1}(x \notin A)\right)\sum_{k\geq 0} \mu(k) \left(\sum_{y\in S} P(x,y) \bE_{y}^{\mu}\left[e^{-t L_{n-1}(A)}\right]\right)^k
\label{eq:pgf_recursion}
\end{align}
and applying the definition of the pgf order and induction hypothesis yields that
\begin{align*}
\bE_{x}^{\mu}\left[e^{-t L_n(A)}\right]
&\geq\left(e^{-t}\mathbbm{1}(x \in A) + \mathbbm{1}(x \notin A)\right)\sum_{k\geq 0} \nu(k) \left(\sum_{y\in V} P(x,y) \bE_{u}^{\mu}\left[e^{-t L_{n-1}(A)}\right]\right)^k\\
&\geq\left(e^{-t}\mathbbm{1}(x \in A) + \mathbbm{1}(x \notin A)\right)\sum_{k\geq 0} \nu(k) \left(\sum_{y\in V} P(x,y) \bE_{u}^{\nu}\left[e^{-t L_{n-1}(A)}\right]\right)^k
\nonumber\\
&=\bE_x^\nu\left[\bE_{x}^{\nu}\left[e^{-t L_n(A)} \;\big|\; |B_1| \right]  \right]=\bE_{x}^{\nu}\left[e^{-t L_n(A)}\right]\nonumber
\end{align*}
as required.
\end{proof}

\begin{corollary}
\label{cor:pgf}
Let $P$ be a transition matrix on a countable state space $S$ and let $\mu$ and $\nu$ be supercritical offspring distributions such that $\mu \leq_\mathrm{pgf}  \nu$. Then every $\nu$-transient set is $\mu$-transient and every $\mu$-recurrent set is $\nu$-recurrent.
\end{corollary}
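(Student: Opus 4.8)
The plan is to derive both halves of \cref{cor:pgf} from \cref{lemma:pgf_induction} by letting $t\downarrow 0$, supplemented by the first-generation recursion for local times and the elementary fixed-point theory of Galton--Watson processes. Fix $A\subseteq S$ throughout. For an offspring distribution $\rho$ write $h_\rho(x)=\bP_x^\rho(L(A)<\infty)$, and let $q_\rho$ be the extinction probability of $\rho$, i.e.\ the smallest fixed point of $\cP_\rho$ in $[0,1]$, which is strictly less than $1$ precisely because $\rho$ is supercritical. Since the extinction event depends only on the underlying Galton--Watson tree (not on $P$ or the starting point), $\bP_x^\rho(\Omega_\infty^c)=q_\rho$ for every $x\in S$, and since only finitely many particles are ever born on $\Omega_\infty^c$ we have $\{L(A)=\infty\}\subseteq\Omega_\infty$ almost surely. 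Consequently, for supercritical $\rho$: the set $A$ is $\rho$-transient iff $h_\rho\equiv 1$, and (using that supercriticality already gives $\bP_x^\rho(\Omega_\infty)>0$) $A$ is $\rho$-recurrent iff $h_\rho\equiv q_\rho$. The first step proper is then to let $t\downarrow 0$ in \cref{lemma:pgf_induction}: since $e^{-tL(A)}\to\mathbbm 1(L(A)<\infty)$ boundedly, dominated convergence yields the single pointwise inequality $h_\mu(x)\ge h_\nu(x)$ for all $x\in S$.

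Granting this inequality, the transience half is immediate: if $A$ is $\nu$-transient then $h_\nu\equiv 1$, so $1\ge h_\mu\ge h_\nu\equiv 1$ forces $h_\mu\equiv 1$, i.e.\ $A$ is $\mu$-transient.

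For the recurrence half I would argue as follows. Assume $A$ is $\mu$-recurrent, so $h_\mu\equiv q_\mu$ and hence $h_\nu\le q_\mu<1$ everywhere by the previous inequality. Conditioning on the first generation exactly as in the proof of \cref{lemma:pgf_induction} --- but with $L(A)$ in place of $L_n(A)$, and using that a finite sum of nonnegative integers is finite iff every summand is --- shows that $h_\nu$ solves the recursion $h_\nu(x)=\cP_\nu\bigl(\sum_{y\in S}P(x,y)h_\nu(y)\bigr)$. Put $c:=\sup_{x\in S}h_\nu(x)$, so $q_\nu\le c\le q_\mu<1$, the lower bound holding because extinction forces $L(A)<\infty$. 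Then $\sum_{y}P(x,y)h_\nu(y)\le c$ for every $x$, so monotonicity of $\cP_\nu$ gives $h_\nu(x)\le\cP_\nu(c)$ for every $x$, and taking the supremum over $x$ yields $c\le\cP_\nu(c)$. But $\cP_\nu$ is strictly convex (since $\nu$ supercritical puts mass on some $k\ge 2$) and fixes both $q_\nu$ and $1$, so $\cP_\nu(t)<t$ for every $t\in(q_\nu,1)$; as $c<1$ this forces $c\le q_\nu$, hence $h_\nu\equiv q_\nu$ and $A$ is $\nu$-recurrent.

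I expect the one genuinely delicate point to be this recurrence direction. The issue is that \cref{lemma:pgf_induction} only bounds $h_\nu$ from above by $h_\mu$, and because $q_\nu\le q_\mu$ --- the $\nu$-process survives more often, so that $\nu$-recurrence is a statement conditioned on a strictly larger event --- this a priori only confines $h_\nu$ to the interval $[q_\nu,q_\mu]$ rather than pinning it down to $q_\nu$. What closes the gap is that the first-generation recursion for $h_\nu$, together with the strict contraction of $\cP_\nu$ towards $q_\nu$ on $(q_\nu,1)$, is rigid enough to force $h_\nu\equiv q_\nu$ the moment we know $h_\nu$ is bounded away from $1$. The remaining ingredients --- justifying the interchange of limit and expectation, deriving the first-generation recursion, and the standard Galton--Watson fixed-point facts --- are all routine.
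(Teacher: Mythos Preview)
Your proof is correct. The transience half and the use of \cref{lemma:pgf_induction} to obtain $h_\mu\ge h_\nu$ pointwise are exactly what the paper does. For the recurrence half, however, you take a genuinely different route: the paper argues dynamically, using the Markov property at time $n$ to get $\bP_x^{\nu}(L(A)<\infty\mid B_n)\le(1-p)^{|B_n|}$ (with $p=\bP^\mu(\Omega_\infty)$) and then sending $n\to\infty$ via the almost-sure divergence of $|B_n|$ on $\Omega_\infty$; you instead argue statically, deriving the first-generation fixed-point equation $h_\nu=\cP_\nu(Ph_\nu)$ and using the strict inequality $\cP_\nu(t)<t$ on $(q_\nu,1)$ to force $\sup_x h_\nu(x)=q_\nu$ once you know this supremum is below $q_\mu<1$. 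Both arguments hinge on the same consequence of \cref{lemma:pgf_induction}, namely that $h_\nu$ is bounded uniformly away from $1$. Your approach is a little more self-contained in that it avoids invoking the almost-sure growth of the Galton--Watson population, trading this for the elementary convexity/fixed-point structure of $\cP_\nu$; the paper's approach, on the other hand, is more flexible and is reused essentially verbatim in the germ-order setting of \cref{thm:germ}, where one only has a bound on $\bP^\nu_x(L(A)=0)$ rather than a recursion for it.
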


\begin{proof}[Proof of \cref{cor:pgf}]
Fix $A \subseteq S$. We have by monotone convergence and \cref{lemma:pgf_induction} that
\begin{align*}\bP_x^{\mu}(L(A) = \infty) = 1-\lim_{t \downarrow 0} \bE_x^{\mu}\left[e^{-tL(A)}\right] \leq 1-\lim_{t \downarrow 0} \bE_x^{\nu}\left[e^{-tL(A)}\right] = \bP_x^{\nu}(L(A) = \infty)
\end{align*}
for every $x\in S$. It follows immediately that if $A$ is $\nu$-transient then it is $\mu$-transient also. 
Now suppose that $A \subseteq S$ is $\mu$-recurrent. Since $\mu$ is supercritical and $\bP_x^{\mu}(\Omega_\infty)=p>0$ does not depend on $x$, we have that
\[
\bP_x^{\nu}(L(A) = \infty) \geq \bP_x^{\mu}(L(A) = \infty) = \bP_x^{\mu}(\Omega_\infty) = p
\]
for every $x\in S$.
Letting $|B_n| = \sum_{y\in S} B_n(y)$ be the total number of particles in generation $n$, we have by the Markov property of the branching Markov process that
\[\bP_x^{\nu}(L(A) < \infty \mid B_n)
= \prod_{y \in S} \bP_y^{\nu}(L(A) < \infty)^{B_n(y)} \leq (1-p)^{|B_n|}
\]
for every $x\in S$ and $n\geq 1$, so that taking expectations over both sides and sending $n\to\infty$ yields that
\[\bP_x^{\nu}(L(A) < \infty)
 \leq \lim_{n\to\infty} \bE_x^{\nu} \left[ (1-p)^{|B_n|} \right] = \bP_x^{\nu}(\Omega_\infty^c),
\]
where we used that, since $\nu$ is supercritical, $|B_n| \to\infty$  almost surely as $n\to\infty$ on the event $\Omega_\infty$. This implies that $\bP_x^{\nu}(L(A) = \infty \mid \Omega_\infty)=1 $, so that $A$ is $\nu$-recurrent as required.
\end{proof}

We now extend this result from the pgf order to the germ order, which requires a rather more subtle approach. We write $a \wedge b = \min\{a,b\}$ and $a \vee b = \max \{a,b\}$.

\begin{lemma}
\label{lem:germ_recursion}
Let $P$ be a transition matrix on a countable state space $S$ and let $\mu$ and $\nu$ be supercritical offspring distributions with $\mu \leq_\mathrm{germ} \nu$. Then there exists $\eps>0$ such that
\[
\bE_x^{\mu}\left[t^{\mathbbm{1}(L(A)>0)}\right] \geq t \vee \bE_x^{\nu}\left[t^{L(A)}\right]
\]
for every $A\subseteq V$ and $1-\eps \leq t \leq 1$.
\end{lemma}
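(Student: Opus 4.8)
Since $t^{\mathbbm 1(L(A)>0)}$ takes only the values $t$ and $1$ it is pointwise $\geq t$, so the bound $\bE_x^\mu[t^{\mathbbm 1(L(A)>0)}]\geq t$ is automatic, and the content of the lemma is the inequality $\bE_x^\mu[t^{\mathbbm 1(L(A)>0)}]\geq\bE_x^\nu[t^{L(A)}]$ for all $t$ in some interval $[1-\eps,1]$. I would prove this by the generation-by-generation induction of \cref{lemma:pgf_induction}, but comparing the \emph{hitting indicator} $\mathbbm 1(L_n(A)>0)$ on the $\mu$ side against the full local time $L_n(A)$ on the $\nu$ side. Write $h_{x,n}(t)=\bE_x^\mu\bigl[t^{\mathbbm 1(L_n(A)>0)}\bigr]$ and $g_{x,n}(t)=\bE_x^\nu\bigl[t^{L_n(A)}\bigr]$, with $L_n(A)$ the number of particles visiting $A$ by time $n$ as in \cref{lemma:pgf_induction}; the plan is to show $h_{x,n}(t)\geq g_{x,n}(t)$ for all $n\geq 0$, $x\in S$, and $t\in[1-\eps,1]$, and then send $n\to\infty$ by monotone convergence. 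The point of the indicator is that $h_{x,n}(t)=t+(1-t)\bar q_{x,n}$ is affine in $t$, where $\bar q_{x,n}=\bP^\mu_x(L_n(A)=0)$ (indeed $t^{\mathbbm 1(L_n(A)>0)}$ equals $t$ with probability $1-\bar q_{x,n}$ and $1$ with probability $\bar q_{x,n}$), and the only argument ever fed into $\cP_\mu$ along the recursion for $\bar q_{\cdot,n}$ is $\sum_y P(x,y)\bar q_{y,n}$; unlike the quantity $\sum_y P(x,y)\bE^\mu_y[t^{L_{n-1}(A)}]$ appearing in \eqref{eq:pgf_recursion}, there is no mechanism that can push this argument away from $1$, which is precisely the obstruction to running the pgf argument under the weaker germ hypothesis.

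The base case $n=0$ is an equality, and when $x\in A$ we have $h_{x,n+1}(t)=t\geq g_{x,n+1}(t)$ directly, since then $L_{n+1}(A)\geq 1$. For $x\notin A$, the first-generation decomposition used in \eqref{eq:pgf_recursion} gives $\bar q_{x,n+1}=\cP_\mu(\beta)$ and hence
\[
g_{x,n+1}(t)=\cP_\nu(\gamma),\qquad h_{x,n+1}(t)=t+(1-t)\,\cP_\mu(\beta),
\]
where $\gamma=\sum_y P(x,y)\,g_{y,n}(t)$ and $\beta=\sum_y P(x,y)\,\bar q_{y,n}$. Averaging the induction hypothesis $h_{y,n}(t)\geq g_{y,n}(t)$, i.e.\ $t+(1-t)\bar q_{y,n}\geq g_{y,n}(t)$, against $P(x,\cdot)$ yields $\gamma\leq t+(1-t)\beta$, so by monotonicity of $\cP_\nu$ it suffices to prove the purely analytic claim that there is $\eps>0$ with
\[
(1-r)+r\,\cP_\mu(s)\ \geq\ \cP_\nu\bigl((1-r)+rs\bigr)\qquad\text{for all }r\in[0,\eps]\text{ and }s\in[0,1].
\]
Setting $s=1-v$ and writing $\phi_\mu(v)=1-\cP_\mu(1-v)$ and $\phi_\nu(v)=1-\cP_\nu(1-v)$, the two sides become $1-r\phi_\mu(v)$ and $1-\phi_\nu(rv)$, so the claim is equivalent to the clean inequality $\phi_\nu(rv)\geq r\,\phi_\mu(v)$ for all $r\in[0,\eps]$ and $v\in[0,1]$.

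To prove $\phi_\nu(rv)\geq r\phi_\mu(v)$ I would fix a threshold $\eta>0$ small enough that the defining inequality of $\mu\leq_\mathrm{germ}\nu$ holds on $[1-\eta,1]$, i.e.\ $\phi_\mu\leq\phi_\nu$ on $[0,\eta]$, and argue separately for $v\leq\eta$ and $v\geq\eta$. For $v\leq\eta$, concavity of $\phi_\nu$ with $\phi_\nu(0)=0$ gives $\phi_\nu(rv)\geq r\phi_\nu(v)\geq r\phi_\mu(v)$. For $v\geq\eta$, both $\phi_\mu$ and $\phi_\nu$ are concave with value $0$ at $0$, so $t\mapsto\phi_\mu(t)/t$ and $t\mapsto\phi_\nu(t)/t$ are nonincreasing on $(0,1]$; hence $\phi_\mu(v)/v\leq\phi_\mu(\eta)/\eta$, and, using $rv\leq r\leq\eps$, $\phi_\nu(rv)/(rv)\geq\phi_\nu(\eps)/\eps$, so that
\[
\phi_\nu(rv)\ \geq\ \frac{\phi_\nu(\eps)}{\eps}\,rv\ \geq\ \frac{\phi_\mu(\eta)}{\eta}\,rv\ \geq\ \frac{\phi_\mu(v)}{v}\,rv\ =\ r\,\phi_\mu(v),
\]
provided $\eps$ is small enough that $\phi_\nu(\eps)/\eps\geq\phi_\mu(\eta)/\eta$. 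Such an $\eps$ exists because $\phi_\nu(\eps)/\eps\to\bar\nu$ as $\eps\downarrow 0$ while $\phi_\mu(\eta)/\eta<\bar\mu\leq\bar\nu$, where the strict inequality comes from strict concavity of $\phi_\mu$ (equivalently $\cP_\mu''>0$ on $(0,1)$, which holds since the supercritical distribution $\mu$ must charge $\{2,3,\dots\}$) and $\bar\mu\leq\bar\nu$ is forced by $\mu\leq_\mathrm{germ}\nu$ via \eqref{eq:Taylor}. Note that $\eps$ depends only on $\mu$ and $\nu$, not on $A$, as the statement requires.

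The crux is this analytic claim in the regime $\bar\mu=\bar\nu$: there the naive linear bounds $\phi_\mu(v)\leq\bar\mu v$ and $\phi_\nu(v)\leq\bar\nu v$ both point the wrong way, and one must combine two inputs living at two different scales — the germ refinement, which controls $\phi_\mu$ by $\phi_\nu$ only on an arbitrarily small neighbourhood of $0$, and the strict concavity of $\phi_\mu$ from supercriticality, which gives a uniform gap below the line $\bar\mu v$ only on sets bounded away from $0$ — gluing them at the intermediate scale $\eta$. (When $\bar\mu<\bar\nu$ the claim is easy: $\phi_\nu(rv)\geq(\phi_\nu(\eps)/\eps)rv\geq\bar\mu rv\geq r\phi_\mu(v)$ already for a single sufficiently small $\eps$, and an infinite mean, if not excluded a priori, is covered by the same two-regime argument using $\phi_\nu(\eps)/\eps\to\infty$.) Once \cref{lem:germ_recursion} is available, I expect \cref{thm:germ} to follow from it by essentially the argument deducing \cref{cor:pgf} from \cref{lemma:pgf_induction}, and \cref{thm:variations} by a routine elaboration of the same.
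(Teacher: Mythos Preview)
Your proof is correct and takes a genuinely different route from the paper's. The paper fixes $\alpha$ with $\cP_\mu\geq\cP_\nu$ on $[\alpha,1]$ and runs \emph{truncated} recursions $\mathbf I_\mu,\mathbf I_\nu:[\alpha,1]^S\to[\alpha,1]^S$ (the usual pgf recursion, but clamped below at $\alpha$) starting from $\alpha^{\mathbbm 1(\cdot\in A)}$; the $\nu$-limit $G_\infty$ is easily seen to dominate $\alpha\vee\bE^\nu_x[\alpha^{L(A)}]$, while to identify the $\mu$-limit $F_\infty$ the paper introduces the set $D=\{F_\infty=\alpha\}\supseteq A$, observes that a recursion started from $\alpha^{\mathbbm 1(\cdot\in D)}$ never actually hits the clamp off $D$, and matches it to $\bE^\mu_x[\alpha^{E(D)}]$ with $E(D)$ the number of first entries to $D$, giving $F_\infty\leq\bE^\mu_x[\alpha^{\mathbbm 1(L(A)>0)}]$. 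Your argument avoids the fixed-point and first-passage machinery entirely: you compare $\bE^\mu_x[t^{\mathbbm 1(L_n(A)>0)}]$ to $\bE^\nu_x[t^{L_n(A)}]$ directly by induction on $n$, and the induction step reduces to the single analytic inequality $\phi_\nu(rv)\geq r\,\phi_\mu(v)$ for $r\in[0,\eps]$, $v\in[0,1]$, which you establish cleanly from concavity, the germ hypothesis near $0$, and the strict gap $\phi_\mu(\eta)/\eta<\bar\mu\leq\bar\nu$ coming from supercriticality. The trade-off is that the paper's proof yields the conclusion for every $t$ in the full germ interval $[\alpha,1]$, whereas your $\eps$ must additionally satisfy $\phi_\nu(\eps)/\eps\geq\phi_\mu(\eta)/\eta$ and can be strictly smaller; on the other hand your argument is shorter, more transparent, and isolates the exact analytic content of the lemma in a one-line inequality.
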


\begin{proof}[Proof of \cref{lem:germ_recursion}]
Fix $A \subseteq S$.
Let $\alpha<1$ be such that $\mathcal{P}_\mu(t) \geq \mathcal{P}_\nu(t)$ for every $\alpha \leq t \leq 1$. The idea is to analyze an approximate version of the recursion used in \cref{lemma:pgf_induction} but where we do not allow any of the values to drop below $\alpha$, and then relate the fixed points of this recursion to the branching random walk.
For each function $F:S \to [\alpha,1]$ we define $\mathbf{I}_\mu F:S\to [\alpha,1]$
 and $\mathbf{I}_\nu F:S\to [\alpha,1]$ by
\begin{align*}
\mathbf{I}_\mu F(x) &= \left[ \alpha \vee \left(\alpha^{\mathbbm{1}(x\in A)} 
\mathcal{P}_\mu \bigl[PF(x)\bigr]\right) \right] \wedge F(x) \qquad \text{ and }\\
 \mathbf{I}_\nu F(x) &= \left[ \alpha \vee \left(\alpha^{\mathbbm{1}(x\in A)} 
\mathcal{P}_\nu \bigl[PF(x)\bigr]\right) \right] \wedge F(x)
\end{align*}
for each $x\in S$, where we define $PF(x) := \sum_{y\in S}P(x,y)F(y)$ for each $x\in S$.
  Since $P$ is a transition matrix, if $F : S \to [\alpha,1]$ then $\alpha \leq PF(x) \leq 1$ for every $x\in S$ also, and it follows by definition of $\alpha$ that
\begin{equation}
\alpha \leq \mathbf{I}_\nu F(x) \leq \mathbf{I}_\mu F(x) \leq F(x) \leq 1
\label{eq:Iproperties}
\end{equation}
for every $F:S\to [\alpha,1]$ and $x\in S$. Moreover, $\mathbf{I}_\mu$ and $\mathbf{I}_\nu$ are both monotone increasing in the sense that if $F,G :S\to [\alpha,1]$ are such that $F \leq G$ pointwise then $\mathbf{I}_\mu F \leq \mathbf{I}_\mu G$ and $\mathbf{I}_\nu F \leq \mathbf{I}_\nu G$ pointwise. 

We consider the sequences of functions $(F_n)_{n\geq 0}$ and $(G_n)_{n\geq 0}$ from $S$ to $[\alpha,1]$ defined recursively by $F_0(x)=G_0(x)=\alpha^{\mathbbm{1}(x\in A)}$ for every $x\in S$ and
\[
 F_{n+1}=\mathbf{I}_\mu F_n, \qquad G_{n+1}=\mathbf{I}_\nu G_n \qquad \text{ for every $n\geq 0$}.
\]
It follows inductively by \eqref{eq:Iproperties} that
$F_n \geq G_n$ pointwise 
for every $n \geq 1$. Moreover, the sequences $F_n$ and $G_n$ are pointwise decreasing, and therefore converge pointwise as $n\to\infty$ to some functions $F_\infty,G_\infty:S\to[\alpha,1]$ such that
\begin{equation}
\label{eq:GPcomparison}
 \alpha \leq G_\infty(x) \leq F_\infty(x) \leq \alpha^{\mathbbm{1}(x\in A)} \text{ for every $x\in S$.}
\end{equation}
 We next claim that
\begin{equation}
\label{eq:Gupper}
G_n(x) \geq \alpha \vee \bE_x^{\nu}\left[\alpha^{L_n(A)}\right] \text{ for every $n\geq 0$ and hence that } G_\infty(x) \geq \alpha \vee \bE_x^{\nu}\left[\alpha^{L(A)}\right]
\end{equation}
 for every $x\in S$. We prove this claim by induction on $n$, the base case $n=0$ being trivial. If this inequality holds for some $n\geq 0$ then, letting $|B_n|=\sum_y B_n(y)$ denote the total number of particles in the $n$th generation of the process, we have that
\begin{align*}
\alpha \vee \bE_x^{\nu}\left[\alpha^{L_{n+1}(A)}\right] &=
\alpha \vee \bE_x^\nu\left[\bE_x^{\nu}\left[\alpha^{L_{n+1}(A)} \;\Big|\; |B_1|\right]\right]
\\&=
 \alpha \vee \left(\alpha^{\mathbbm{1}(x\in A)}\cP_\nu\!\left[ \sum_{y\in S}P(x,y)\bE_y^{\nu}\left[\alpha^{L_{n}(A)}\right] \right]\right)\leq 
 \alpha \vee \left(\alpha^{\mathbbm{1}(x\in A)}\cP_\nu\!\left[ PG_{n}(x) \right]\right)
\end{align*}
for every $x\in S$. We also trivially have that
\begin{equation*}
\alpha \vee \bE_x^{\nu}\left[\alpha^{L_{n+1}(A)}\right] \leq \alpha \vee \bE_x^{\nu}\left[\alpha^{L_{n}(A)}\right] \leq G_n(x),
\end{equation*}
and it follows that
\[
\alpha \vee \bE_x^{\nu}\left[\alpha^{L_{n+1}(A)}\right]
\leq 
 \left[\alpha \vee \left(\alpha^{\mathbbm{1}(x\in A)}\cP_\nu\!\left[ PG_{n}(x) \right]\right)\right] \wedge G_n(x) = \mathbf{I}_\nu G_n(x) = G_{n+1}(x)
\]
for every $x\in S$. This completes the induction step and hence also the proof of \eqref{eq:Gupper}.


To complete the proof, it suffices in light of the inequalities  \eqref{eq:GPcomparison} and \eqref{eq:Gupper} to prove that 
\[F_\infty(x) \leq \bE_x^{\mu}\left[\alpha^{\mathbbm{1}(L(A)>0)}\right]\]
for every $x\in S$. Observe that $\mathbf{I}_\mu$ maps the product space $[\alpha,1]^S$ to itself continuously, so that $F_\infty$ is a fixed point of $\mathbf{I}_\mu$ in the sense that $\mathbf{I}_\mu F_\infty = \lim_{n\to\infty} \mathbf{I}_\mu F_n = \lim_{n\to\infty} F_{n+1} = F_\infty$.
 Let $D = \{x\in S : F_\infty(x)=\alpha\} \supseteq A$ and consider the sequence of functions $(H_n)_{n\geq 0}$ from $S$ to $[\alpha,1]$ defined recursively by 
$H_0(x)=\alpha^{\mathbbm{1}(x \in D)}$ and $H_{n+1}= \mathbf{I}_\mu H_n$ for each $n\geq 0$. Since $F_\infty \leq H_0$, we have by induction that 
\[F_\infty =\mathbf{I}_\mu F_\infty \leq \mathbf{I}_\mu H_{n-1} = H_n \] for every $n\geq 1$. It follows in particular that $H_n(x) \geq F_\infty(x)>\alpha$ for every $n\geq 0$ and $x\notin D$, from which we deduce that the sequence $(H_n)_{n\geq 0}$ must satisfy the recursion
\begin{equation}
\label{eq:H_recursion}
H_{n+1}(x) = \begin{cases}
\alpha & x \in D\\
\mathcal{P}_\mu \left(PH_n(x)\right) \wedge H_n(x) & x \notin D.
\end{cases}
\end{equation}
Indeed, we see by inspecting the definition of $\mathbf{I}_\mu$ that this recursion can only fail to hold at some step $n+1$ if $H_{n+1}(x) = \alpha$ for some $x\notin D$.
Let $E_n(D)$ be the number of particles that occupy $D$ by time $n$ and do not have any ancestors that occupied $D$, and let $E(D)$ be the total number of such particles over all time. (In particular, if the branching Markov process starts at an element of $D$ then the particle at time $0$ is the only such particle, so that $E(D)=1$.) We now claim that $\bE_x^{\mu}[\alpha^{E_n(D)}] = H_n(x)$ for every $n\geq 0$: Indeed, we observe that $H_n'(x):=\bE_x^{\mu}[\alpha^{E_n(D)}]$ satisfies the recursion 
\begin{align*}
H'_{n+1}(x) = \bE_x^{\mu}\left[\bE_x^{\mu}\left[\alpha^{E_{n+1}(D)} \;\Big|\; |B_1|\right]\right] &= \begin{cases}
\alpha & x \in D\\
\mathcal{P}_\mu \left(PH_n'(x)\right) & x \notin D 
\end{cases}
\end{align*}
for every $x\in S$ and $n\geq 0$, and since $H_{n+1}'(x) \leq H_n'(x)$ for every $n\geq 0$ and $x\in S$ it follows moreover that
\begin{align*}
H'_{n+1}(x) &=
\begin{cases}
\alpha & x \in D\\
\mathcal{P}_\mu \left(PH_n'(x)\right) \wedge H_n'(x) & x \notin D 
\end{cases}
\end{align*}
for every $x\in S$ and $n\geq 0$. That is, the two sequences $(H_n)_{n\geq 0}$ and $(H_n')_{n\geq 0}$ satisfy the same recursive relationship between their successive terms.
 Since we also trivially have that $H_0=H_0'$, it follows by induction that $H_n'(x)=\bE_x^{\mu}[\alpha^{E_n(D)}] = H_n(x)$ for every $n\geq 0$ and $x\in S$ as claimed. Since $E(D) \geq 1$ when $L(A)>0$, we may take $n\to \infty$ and deduce by bounded convergence that
\[
F_\infty(x) \leq \lim_{n \to \infty} H_n(x) = \bE_x^{\mu}\left[\alpha^{E(B)}\right] \leq \bE_x^{\mu}\left[ \alpha^{\mathbbm{1}(L(A)>0)}\right]
\]
for every $x\in S$ as claimed.
\end{proof}

We now apply \cref{lem:germ_recursion} to prove \cref{thm:germ}.

\begin{proof}[Proof of \cref{thm:germ}]
We consider the space-time version of the branching Markov process that has state space $S \times \Z$ and transition matrix
\[
P^\mathrm{st}\left((x,m),(y,n)\right) = \begin{cases} P(x,y) & n=m+1\\
0 & \text{otherwise} 
\end{cases}
\qquad \text{ for each $x,y\in S$ and $m,n \in \Z$.}
\]
Note that, for each offspring distribution $\mu$, the branching Markov process $(B^\mathrm{st}_n)_{n\geq 0}$ associated to the pair $(P^\mathrm{st},\mu)$ started at $(x,m)$ can be coupled with the branching Markov process $(B_n)_{n\geq 0}$ associated to $(P,\mu)$ by taking $B_n(y)=B^\mathrm{st}(y,m+n)$ for each $y\in S$ and $n\geq 0$. Moreover, a set $A \subseteq S$ is visited infinitely often by the branching Markov process $(B_n)_{n\geq 0}$ if and only if $A \times \Z$ is visited infinitely often by the space-time branching Markov process $(B^\mathrm{st}_n)_{n\geq 0}$. As such, it suffices to prove that if $\mu \leq_\mathrm{germ} \nu$ then every $\nu$-transient subset of $S \times \Z$ is $\mu$-transient and every $\mu$-recurrent subset of $S \times \Z$ is $\nu$-recurrent. 

First suppose that $A \subseteq S \times \Z$ is $\nu$-transient.
From now on we will write $\bP_{x,m}^{\mu}$ and $\bP_{x,m}^\nu$ for the laws of the space-time branching Markov process $(B^\mathrm{st}_n)_{n\geq 0}$ started at $(x,m)\in S \times \Z$ with offspring distribution $\mu$ or $\nu$ as appropriate.
 Let $A_k = A \cap (S \times [k,\infty))$ for each $k\in \Z$, so that $L(A)=\infty$ if and only if $L(A_k)=\infty$ for infinitely many non-negative $k$, if and only if $L(A_k)>0$ for every $k\in \Z$.
 Applying \cref{lem:germ_recursion} to the space-time process yields that there exists $t<1$ such that
\begin{align*}
\bP^{\mu}_{x,m}(L(A_k)=0)=
\frac{\bE_{x,m}^{\mu}\left[t^{\mathbbm{1}(L(A_k)>0)}\right]-t}{1-t} \geq \frac{t \vee \bE_{x,m}^{\nu}\left[t^{L(A_k)}\right]-t}{1-t}
\end{align*}
for every $(x,m)\in S \times \Z$.
Taking the limit as $k\to\infty$, we deduce by bounded convergence that
\begin{align*}
\bP^{\mu}_{x,m}(L(A)<\infty) &= \lim_{k\to \infty}\bP^{\mu}_{x,m}(L(A_k)=0) \\&\geq
\lim_{k\to\infty} \frac{t \vee \bE_{x,m}^{\nu}\left[t^{L(A_k)}\right]-t}{1-t} = \frac{t\vee\bP^{\nu}_{x,m}(L(A)<\infty)-t}{1-t}
\end{align*}
for every $(x,m)\in S \times \Z$.
Since $A$ is $\nu$-transient the the right hand side is equal to $1$ for every $(x,m)\in S \times \Z$, so that the left hand side is also equal to $1$ for every $(x,m)\in S \times \Z$ and $A$ is $\mu$-transient as claimed.

Now suppose that $A \subseteq S \times \Z$ is $\mu$-recurrent.  We apply \cref{lem:germ_recursion} to obtain that there exists $t<1$ such that
\begin{align*}
  \bP_{x,m}^{\nu}(L(A)=0) &\leq  \bE_{x,m}^{\nu}\left[t^{L(A)}\right] \leq t\vee \bE_{x,m}^{\nu}\left[t^{L(A)}\right]  \leq t\bP_{x,m}^{\mu}(L(A)>0)+\bP_{x,m}^{\mu}(L(A)=0)
\end{align*}
for every $(x,m)\in S \times \Z$.
Letting $p=\bP_{x,m}^{\mu}(\Omega_\infty)>0$ be the survival probability of the $\mu$-branching process, which does not depend on the choice of $(x,m) \in S \times \Z$, we deduce that
\begin{multline*}
 \bP_{x,m}^{\nu}(L(A)=0)  \leq tp + t(1-p)\bP_{x,m}^{\mu}(L(A)>0 \mid \Omega_\infty^c) + (1-p) \bP_{x,m}^{\mu}(L(A)=0 \mid \Omega_\infty^c)\\
 \leq tp+(1-p) < 1
\end{multline*}
for every $(x,m)\in S \times \Z$. It follows by the Markov property that
\[
\bP_{x,m}^{\nu}(L(A)=0 \mid B_n^\mathrm{st}) \leq \prod_{y\in S} \bP_{y,m+n}^\nu(L(A) = 0 )^{B^\mathrm{st}_n(y,m+n)} \leq (1-p+tp)^{|B^\mathrm{st}_n|},
\]
for every $(x,m)\in S \times \Z$ and $n\geq 0$, where, as before, $|B^\mathrm{st}_n|=\sum_{y} B^\mathrm{st}_n(y,m+n)$ denotes the total number of particles at time $n$. Since $\nu$ is supercritical we have that $|B^\mathrm{st}_n| \to \infty$ almost surely as $n\to \infty$ on the event $\Omega_\infty$, so that taking expectations over both sides and sending $n\to\infty$ yields that $\bP_{x,m}^{\nu}(L(A)>0 \mid \Omega_\infty)=1$ for every $(x,m) \in S \times \Z$. This is easily seen to imply that $A$ is $\nu$-recurrent as claimed.
\end{proof}

\begin{proof}[Proof of \cref{thm:variations}] As above, it suffices to prove the claim for the space-time branching Markov process.
Fix $P$, $\mu$, and $\nu$. For each $(x,m) \in S \times \Z$, we write $\bP_{x,m}^{\mu}$ and $\bP_{x,m}^\nu$ for the laws of the space-time branching Markov process $(B^\mathrm{st}_n)_{n\geq 0}$ started at $(x,m)\in S \times \Z$ with offspring distribution $\mu$ or $\nu$ as appropriate.
Let $A_1,A_2,\ldots$ be a sequence of subsets of $S\times \Z$ and enumerate $S=\{x_1,x_2,\ldots\}$. 
%
For each $i\geq 1$ and $-\infty \leq s \leq t \leq \infty$, let $\sE_i(s,t)$ be the event that
$B^\mathrm{st}$ visits $A_i \cap (S \times [s,t])$, and let $\sR_i$ be the event that $A_i$ is visited infinitely often.
%
  For each $i\geq 1$ and $(x,m)\in S \times \Z$, we have by continuity of measure that there exists $N(i,x,m)$ such that 
\[\bP^\mu_{x,m}\left(\sE_i\bigl(N(i,x,m),\infty\bigr) \setminus \sR_i\right) \leq 2^{-i} \qquad \text{and} \qquad \bP^\nu_{x,m}\left(\sE_i\bigl(N(i,x,m),\infty\bigr) \setminus \sR_i\right) \leq 2^{-i}.\]
Let $N(i)= \max_{j\leq i,|m|\leq i} N(i,x_j,m)$ for each $i\geq 1$. It follows by Borel-Cantelli that, under any of the measures $\bP^\mu_{x,m}$ or $\bP^\nu_{x,m}$ for any $(x,m)\in S \times \Z$, the event that $I_\infty$ is infinite coincides up to a null set with the event that $B^\mathrm{st}$ visits the set
\[
\bigcup_{i\geq k} A_i \cap \bigl(S\times \{N(i),N(i)+1,\ldots\}\bigr)
\]
infinitely often for every $k\geq 1$. Applying continuity of measure a second time, we deduce that for each $i\geq 1$ and $(x,m)\in S \times \Z$ there exists $M(i,x,m) \geq N(i)$ such that
\begin{align*}\bP^\mu_{x,m}\left(\sE_i\bigl(N(i),\infty\bigr) \setminus \sE_i\bigl(N(i),M(i,x,m)\bigr)\right) &\leq 2^{-i} \qquad \text{and} \\
 \bP^\nu_{x,m}\left(\sE_i\bigl(N(i),\infty\bigr) \setminus \sE_i\bigl(N(i),M(i,x,m)\bigr)\right) &\leq 2^{-i}.\end{align*}
  Taking $M(i) = \max_{j\leq i, |m| \leq i} M(i,x_j,m)$ we deduce by Borel-Cantelli that, under any of the measures $\bP^\mu_{x,m}$ or $\bP^\nu_{x,m}$ for any $(x,m)\in S \times \Z$, the event that $I_\infty$ is infinite coincides up to a null set with the event that $B^\mathrm{st}$ visits the set
\[
\bigcup_{i\geq 1} A_i \cap \bigl(S\times \{N(i),N(i)+1,\ldots,M(i)\}\bigr)
\]
infinitely often. This reduces the claim to that of \cref{thm:germ}, completing the proof. \qedhere

\end{proof}


\section{Examples}
\label{sec:examples}

\begin{example}[Maximal displacement]
Let $G=(V,E)$ be a connected, locally finite graph and let $\mu,\nu$ be offspring distributions with $\mu \leq_\mathrm{germ} \nu$. Let $(B_n)_{n\geq0}$ be a branching random walk started at some vertex $x$ of $G$ and let $M_n = \max\{d(x,y): B_n(y) >0\}$ be the maximum displacement of a particle at time $n$. Let $f:\N\to \N$ be a function and let $\alpha>0$. We claim that if $\liminf M_n/f(n) \geq \alpha$ $\bP_x^\mu$-almost surely on the event $\Omega_\infty$ then the same is true $\bP_x^\nu$-almost surely on the event $\Omega_\infty$: this follows from the fact that $\liminf M_n/f(n) \geq \alpha$ $\bP_x^\mu$-almost surely on the event $\Omega_\infty$ if and only if the space-time set
\[
A_\eps = \Bigl\{(y,n) : n\geq 1,\, d(y,x_0) \geq (\alpha-\eps)f(n) \Bigr\}
\]
is $\mu$-recurrent for every $\eps>0$, where $x_0$ is an arbitrary fixed vertex. A similar argument yields that 
if $\limsup M_n/f(n) \leq \alpha$ $\bP_x^\nu$-almost surely then the same is true $\bP_x^\mu$-almost surely. Many works analyzing examples \cite{MR370721,MR400438,MR464415,MR1797310,MR3444654} have bound that the asymptotic rate of growth of $M_n$ depends only on the mean $\bar \mu$ under various hypotheses (it is often possible to give an exact formula), which is of course consistent with this result.
\end{example}

\begin{example}[Intersections] Let $P$ be a transition matrix on a countable state space $S$, and let $(\mu_1,\mu_2)$ and $(\nu_1,\nu_2)$ be two pairs of supercritical offspring distributions such that $\mu_i \leq_\mathrm{germ} \nu_i$ for each $i=1,2$. Suppose that if we sample two independent branching Markov processes with offspring distributions $\mu_1$ and $\mu_2$ then the sets of states visited by both processes is almost surely infinite on the event that both processes survive forever. Applying \cref{thm:germ} twice implies that the same is true if the processes are instead taken to have offspring distributions $\nu_1$ and $\nu_2$. See \cite{MR4146533} for further discussion of this property for branching random walks on transitive graphs.
\end{example}

\begin{example}[The dimension of the limit set]
\label{example:dimension}
Several authors have studied the Hausdorff dimension of the  set of limit points of branching random walk on hyperbolic spaces and groups, and have found that this dimension depends only on the mean of the offspring distribution under various hypotheses \cite{sidoravicius2020limit,MR1452555,MR1641015,MR1736590}. Let us now discuss how a similar result for the \emph{upper Minkowski dimension} can be deduced very abstractly from our main theorems. (Of course, unlike the aforementioned works, our results do not allow us to actually compute this dimension!) 

Let $P$ be a transition matrix on a countable state space $S$, let $d\geq 1$, and let $\varphi:S\to [0,1]^d$ be an embedding of the state space into $[0,1]^d$. Let $(B_n)_{n\geq0}$ and let 
\[\Gamma =\bigcap_{n=0}^\infty \overline{\{\varphi(y) : B_m(y) >0 \text{ for some $m \geq n$}\}}
\]
be the set of accumulation points. 
Given a subset $K$ of $[0,1]^d$ and $\eps>0$, let $N(\eps)$ be the minimum number of balls of radius $\eps$ needed to cover $K$.
The \textbf{upper and lower Minkowski dimensions} of $K$ are defined to be
\[
\operatorname{\overline{dim}}_M(K) = \limsup_{\eps \downarrow 0} \frac{\log N(\eps)}{\log (1/\eps)} \qquad \text{ and } \qquad  
\operatorname{\underline{dim}}_M(K) = \liminf_{\eps \downarrow 0} \frac{\log N(\eps)}{\log (1/\eps)}.
\]
See e.g.\ \cite{MR3236784} for background. We claim that if $\mu \leq_\mathrm{germ} \nu$ then the following hold:
\begin{enumerate}
	\item If $\alpha$ is such that $\operatorname{\overline{dim}}_M(\Gamma) > \alpha$ $\bP^\mu_x$-almost surely on the event $\Omega_\infty$ for every $x\in S$ then $\operatorname{\overline{dim}}_M(\Gamma) \geq \alpha$ $\bP^\nu_x$-almost surely on the event $\Omega_\infty$  for every $x\in S$.
	\item If $\alpha$ is such that $\operatorname{\overline{dim}}_M(\Gamma) < \alpha$ $\bP^\nu_x$-almost surely for every $x\in S$ then $\operatorname{\overline{dim}}_M(\Gamma) \leq \alpha$ $\bP^\mu_x$-almost surely for every $x\in S$.
\end{enumerate}
Indeed, for each $k\geq 1$ let $D_k$ be the set of closed dyadic subcubes of $[0,1]^d$, which have side length $2^{-k}$ and corner coordinates that are integer multiples of $2^{-k}$. For each $0<\alpha\leq d$ and $k\geq 1$ let $R_k(\alpha)$ be the union of the cubes in the random subset of $D_k$ in which each possible cube is chosen to be included or not independently at random with inclusion probability $2^{-\alpha k}$. We take the sequence of random variables $(R_k(\alpha))_{k\geq 1}$ to be independent of each other. For each $k\geq 1$, let $R_k'(\alpha)$ be obtained by enlarging each box contributing to $R_k(\alpha)$ by a factor of $2$, so that $R_k(\alpha)'$ contains an open neighbourhood of $R_k(\alpha)$. It is an easy consequence of (both parts of) the Borel-Cantelli lemma that if $K \subseteq [0,1]^d$ is a fixed closed set then we have the implications
\begin{align*}
\Bigl(\operatorname{\overline{dim}}_M(K) > \alpha\Bigr) &\Rightarrow
\Bigl(K \cap R_k(\alpha) \neq \emptyset  \text{ infinitely often a.s.\ as $k\to\infty$}\Bigr) \\
&\Rightarrow
\Bigl(K \cap R_k'(\alpha) \neq \emptyset  \text{ infinitely often a.s.\ as $k\to\infty$}\Bigr)&\Rightarrow \Bigl(\operatorname{\overline{dim}}_M(K) \geq \alpha\Bigr).
\end{align*}
Take $K=\Gamma$ to be independent of the random sets $(R_k(\alpha))_{k\geq 1}$, and observe that $\Gamma \cap R_k(\alpha) \neq \emptyset$ if and only if $B$ visits $\varphi^{-1}(U)$ infinitely often for every open neighbourhood $U$ of $R_k(\alpha)$.
In particular, if 
$\operatorname{\overline{dim}}_M(\Gamma) > \alpha$ $\bP^\mu_x$-almost surely on the event $\Omega_\infty$ for every $x\in S$ then, on the event $\Omega_\infty$, there are almost surely infinitely many $k$ such that $R'_k(\alpha)$ is visited infinitely often by $B$, and it follows from \cref{thm:variations} that the same is true with respect to the offspring distribution $\nu$. (Applying this theorem in the case that the $A_i$ are themselves random can be done by a standard Fubini argument.)
Similarly, if 
$\operatorname{\overline{dim}}_M(\Gamma) < \alpha$ $\bP^\nu_x$-almost surely for every $x\in S$ then we must have that there are almost surely at most finitely many $k$ such that $R'_k(\alpha)$ is visited infinitely often by $B$, and it follows from \cref{thm:variations} that the same is true with respect to the offspring distribution $\mu$.
\end{example}

\subsection*{Acknowledgments}
We thank Toby Johnson and Matt Junge for helpful discussions.

 \setstretch{1}
 \footnotesize{
  \bibliographystyle{abbrv}
  \bibliography{unimodularthesis.bib}
  }

\end{document}